\documentclass[11pt]{amsart}
\usepackage{amsmath,amsfonts,amssymb,amsthm,amscd,latexsym,euscript,verbatim, bbold}
\usepackage[all]{xy}

\makeatletter
\def\section{\@startsection{section}{1}%
  \z@{1.1\linespacing\@plus\linespacing}{.8\linespacing}%
  {\normalfont\Large\scshape\centering}}
\makeatother

\theoremstyle{plain}

\newtheorem*{conj*}{Root Groups Conjecture}
\newtheorem*{thm1.2}{(1.2) Theorem}
\newtheorem*{thm1.3}{(1.3) Theorem}
\newtheorem*{thm1.4}{(1.4) Theorem}
\newtheorem*{prop*}{Proposition}

\newtheorem{prop}{Proposition}[section]

\newtheorem{thm}[prop]{Theorem}

\newtheorem{lemma}[prop]{Lemma}

\theoremstyle{definition}

\newtheorem*{Def*}{Definition}
\newtheorem{Defs}[prop]{Definitions}

\newtheorem{notation}[prop]{Notation}
\newtheorem*{notation*}{Notation}

\newtheorem{remarks}[prop]{Remarks}

\newcommand{\calh}{\mathcal{H}}

\newcommand{\caln}{\mathcal{N}}

\newcommand{\la}{\lambda}

\newcommand{\ff}{\mathbb{F}}

\newcommand{\kk}{\mathbb{K}}

\newcommand{\zz}{\mathbb{Z}}

\newcommand{\ga}{\alpha}
\newcommand{\gb}{\beta}
\newcommand{\gc}{\gamma}

\newcommand{\gd}{\delta}

\newcommand{\gl}{\lambda}

\newcommand{\gs}{\sigma}

\newcommand{\cent}{\operatorname{Cent}}

\newcommand{\lan}{\langle}
\newcommand{\ran}{\rangle}

\newcommand{\half}{\textstyle{\frac{1}{2}}}

\newcommand{\widebar}[1]{\overset{\mskip1mu\hrulefill\mskip1mu}{#1}
                \vphantom{#1}}

\newcommand{\e}{\mathbb{1}}

\numberwithin{equation}{section}

\hyphenation{Tim-mes-feld}

\begin{document}
\title[]{Associative and Jordan algebras generated by two idempotents}
\author[Louis Rowen, Yoav Segev]
{Louis Rowen$^1$\qquad Yoav Segev}

\address{Louis Rowen\\
         Department of Mathematics\\
         Bar-Ilan University\\
         Ramat Gan\\
         Israel}
\email{rowen@math.biu.ac.il}
\thanks{$^1$Partially supported by the Israel Science Foundation grant no.~1623/16}
\address{Yoav Segev \\
         Department of Mathematics \\
         Ben-Gurion University \\
         Beer-Sheva 84105 \\
         Israel}
\email{yoavs@math.bgu.ac.il}

\keywords{idempotents, associative algebra, Jordan algebra}
\subjclass[2000]{Primary: 16S15, 17C27}

\begin{abstract}
The purpose of this note is to obtain precise information
about  associative or Jordan algebras generated
by two idempotents.
\end{abstract}

\date{\today}
\maketitle
\section{Introduction}

This paper is motivated by recent work on commutative nonassociative algebras
generated by idempotents.  Such algebras  are for example
the Griess algebras associated with vertex operator algebras,
and Majorana algebras \cite{ivanov, matsuo, miyamoto, sakuma}.  Also Jordan algebras
generated by idempotents as well as Axial algebras (\cite{HRS1, HRS2, HSS}) are such,
see also \cite{DeMR}.

In these algebras the adjoint operator associated to the generating
idempotents (i.e., multiplication by the idempotent) is semi-simple
and has few eigenvalues.  Further, certain fusion rules
(i.e., multiplication rules), between the eigenspaces are assumed
(similar to the Peirce decomposition multiplication rules in Jordan algebras, see e.g.,
\cite[Theorem 4, p.~334]{ZSSS}).
One can then associate an involutive automorphism of the algebra to each of these
idempotents, and the group generated by these involutions
are sometimes of great interest (e.g., the Monster group).

In general, it is not unintuitive to think about idempotents in
these algebras in a similar way one thinks of involutions in a
group. In all these algebras it is important to know the subalgebras
generated by two idempotents. Some papers dealt with this question
in the associative case (e.g.~\cite{B,L, V}), and some in the Jordan
algebra case (e.g.~\cite{HRS2, sakuma}).

Throughout $\ff$ is a unital commutative ring.
Let $A$ be a (linear)
algebra over $\ff$ with multiplication denoted by $u\circ v,\ u,v\in
A,$ so if $A$ is a ring we take $\ff=\zz,$ the integers.
We let $A^{(1)}$ be the algebra $A$ if $A$ is unital
(i.e.~$A$ has an identity element), and $A^{(1)}=\ff\oplus A$ with multiplication
defined by
\[
(\ga,x)(\gb,y)=(\ga\gb,\ga y+\gb x+x\circ y),
\]
if $A$ does not have an identity element.  In the latter case $A^{(1)}$
has the identity element $\e=(1,0)$.  We identify $A$ with the subset
 $\{(0,x)\mid x\in A\}$ of $A^{(1)}$.  Thus $\e$ denotes the identity
element of $A^{(1)}$ (also in the case where $A$ is unital).

For an element $x\in A$ we let $x^0=\e\in A^{(1)}$. The cases that
will interest us in this note are the case where $A$ is associative,
and the case where $A=J$ is a Jordan algebra, and $\ff$ is a field
of characteristic not $2$.  In both cases, $A$ is {\it power
associative}, and we let, as usual,  $\ff[x]\subseteq A^{(1)}$ be
the subalgebra of~$A^{(1)}$ generated by $x$ over $\ff,$ i.e., the
set of polynomials in $x$ with coefficients in~$\ff$.

Some parts of our first theorem are mostly known in principle:

\begin{thm}\label{thm assoc}
Let $A$ be an associative algebra (not necessarily with $\e$)
over a unital commutative ring $\ff,$ generated by two distinct idempotents $a$ and $b$.  Denote
multiplication in~$A$ by {\bf juxtaposition}: $x y$.  Then
\begin{enumerate}
\item $($\cite[\S12.2]{B}, \cite[Lemma 3]{L}$)$
$\gs:=(a-b)^2$ is in the center of $A.$

\item
$A$ is spanned by $\gs, a, b$ and $a b$ as a module over
$\ff[\gs]$. In particular $A$ satisfies a multilinear polynomial identity.

\item
If $\gs=\e,$ then $a b a=b a b=0$.  Hence either $a b=b a=0$
and $A=\ff a\oplus\ff b,$ or one of $\ff(a b)$ or $\ff(b a)$ is a nontrivial square-zero ideal of $A$.

\item
If $\gs=0,$ then one of $\ff(a (b-\e))+\ff(b (a-\e))$ or $\ff(a-b)$
is a nontrivial square-zero ideal of $A$.

\item
In both  cases (3) and (4), $A$ has a nilpotent  ideal $I$
such that $A/I$ is commutative.  In case (3) we can take $I^2=0,$
and in case (4) we can take $I^3=0$.

\item
If  $\gs-\gs^2$ is invertible in $A$ (so that $\e\in A$), then
$A\cong M_2(\ff[\gs])$.  In particular,
if $\ff[\gs]$ is a field (so that $\ff\e$ is a field and $\gs$ is algebraic over $\ff\e$),
with $\gs \ne 0,\e,$ then  $A\cong M_2(\ff[\gs]).$

\item $($Compare with \cite[Theorem 4]{L}.$)$
If $A$ is simple, then $\e\in A$ and $\ff[\gs]$ is a field (so $\ff\e$
is a field and $\gs$ is algebraic over $\ff\e$).  Further,  either $A=\ff\e$ is a field
(and $\{a,b\}=\{0,\e\}$)  or  $A\cong M_2(\ff[\gs])$.

\item
Let $J=\ff[\gs]\gs+\ff[\gs]a+\ff[\gs]b$.  Then there is an involution $*$ on $A$ defined by:
\[
(\ga_{\gs}\gs +\ga_a a+\ga_b b+\ga_{a b}(a b))^*=\ga_{\gs}\gs +\ga_a a+\ga_b b+\ga_{a b}(b a)
\]
if and only if
\[\tag{$i$}
\ga (a b)\in J,\text{ for some }\ga\in\ff[\gs] \implies
\ga(b a-a b)=0.
\]
In particular, if $A$ is simple and not commutative,   then $*$ is an involution   on $A$.
\end{enumerate}
\end{thm}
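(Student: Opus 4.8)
The plan is to treat \emph{well-definedness} of $*$ as the only real issue: since the displayed formula is visibly $\ff[\gs]$-linear, $*$ is well defined on $A$ precisely when it respects every $\ff[\gs]$-linear relation among the spanning set $\{\gs,a,b,ab\}$ furnished by part~(2). Such a relation reads $\ga_\gs\gs+\ga_a a+\ga_b b+\ga_{ab}(ab)=0$, so $\ga_{ab}(ab)=-(\ga_\gs\gs+\ga_a a+\ga_b b)\in J$; applying the formula for $*$ to the relation and subtracting, the requirement for consistency becomes exactly that $\ga_{ab}(ba-ab)=0$ whenever $\ga_{ab}(ab)\in J$. This is condition $(i)$: one direction is $(i)$ applied with $\ga=\ga_{ab}$, and conversely an instance $\ga(ab)=\gb_\gs\gs+\gb_a a+\gb_b b\in J$ of the hypothesis of $(i)$ rearranges to such a relation having $\ga$ in the $ab$-slot, so well-definedness forces $\ga(ba-ab)=0$. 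Thus ``$*$ well defined'' $\iff(i)$, and from here I assume $(i)$ and show $*$ is an involution.

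Next I would record the identities needed for the rest. From $a^2=a$, $b^2=b$ and $\gs=(a-b)^2$ one gets the basic relation $ab+ba=a+b-\gs$, i.e. $ba=a+b-\gs-ab$; combined with $\gs$ central (part~(1)) a direct expansion gives $(ab)a=a(ba)=a-\gs a$, $b(ab)=(ba)b=b-\gs b$, $(ab)(ab)=ab-\gs ab$ and $(ba)(ba)=ba-\gs ba$. Applying the formula for $*$ to $ba=a+b-\gs-ab$ gives $(ba)^*=a+b-\gs-ba=ab$; hence $*$ fixes $\gs,a,b$ and interchanges $ab$ and $ba$, so on the spanning set $*$ squares to the identity, and by $\ff[\gs]$-linearity $*\circ*=\mathrm{id}_A$.

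It remains to show $*$ is an anti-automorphism, i.e. $(xy)^*=y^*x^*$ for all $x,y\in A$. Because $\ff[\gs]$ is central in $A$ and $*$ is $\ff[\gs]$-linear, both $(xy)^*$ and $y^*x^*$ are $\ff[\gs]$-bilinear in $(x,y)$, so it suffices to verify the identity for $x,y$ in the spanning set $\{\gs,a,b,ab\}$. The cases where $\gs$ occurs are immediate since $\gs$ is central and $\gs^*=\gs$, and the nine cases with $x,y\in\{a,b,ab\}$ follow from the identities above together with $a^*=a$, $b^*=b$, $(ab)^*=ba$, $(ba)^*=ab$; for instance $((ab)(ab))^*=(ab-\gs ab)^*=ba-\gs ba=(ba)(ba)=(ab)^*(ab)^*$ and $(b(ab))^*=(b-\gs b)^*=b-\gs b=(ba)b=(ab)^*b^*$. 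This completes the proof of the equivalence.

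Finally, suppose $A$ is simple and not commutative. By part~(7), $\ff[\gs]$ is a field and $A\cong M_2(\ff[\gs])$, so $A$ is $4$-dimensional over $\ff[\gs]$; since $\{\gs,a,b,ab\}$ spans $A$ by part~(2), it is an $\ff[\gs]$-basis, and in particular $ab\notin J=\ff[\gs]\gs+\ff[\gs]a+\ff[\gs]b$. Hence if $\ga\in\ff[\gs]$ satisfies $\ga(ab)\in J$, then $\ga$ cannot be invertible in the field $\ff[\gs]$ (otherwise $ab\in J$), so $\ga=0$ and $\ga(ba-ab)=0$; thus $(i)$ holds and $*$ is an involution. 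The only mildly laborious point in all of this is the nine-case check in the third paragraph; the one step requiring a genuine idea is the observation that well-definedness of $*$ is captured \emph{exactly} by $(i)$, and I do not expect any real obstacle beyond that bookkeeping.
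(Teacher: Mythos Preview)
Your proof of part~(8) is correct and follows essentially the same line as the paper's: both reduce well-definedness of $*$ to the vanishing of $\ga_{ab}(ba-ab)$ for every $\ff[\gs]$-relation among $\gs,a,b,ab$, which is precisely condition~$(i)$, and both handle the simple non-commutative case by observing (via part~(7)) that $\{\gs,a,b,ab\}$ is an $\ff[\gs]$-basis so that $(i)$ holds vacuously. The only difference is that the paper dismisses the verification that $*$ is an anti-automorphism with ``it is easy to check,'' whereas you actually carry out the bilinear reduction to the spanning set and supply the key identities; this is a welcome elaboration rather than a departure in strategy.
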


\noindent
(See also Theorem \ref{thm assocmain} for additional significant information.)

For the notion of the
{\it center} of a Jordan algebra, see Definition \ref{defs main}(5)
below.
\begin{thm}\label{thm jordan}
Let $J$ be a Jordan algebra over a field $\ff$ of characteristic not $2$
generated by two distinct idempotents $a$ and $b$. Denote the multiplication
in $J$ by {\bf dot}: $x\cdot y$.  Then
\begin{enumerate}
\item
$\gs:=(a-b)^2$ is in the center of $J;$

\item
$J$ is spanned by $a, b,$ and $\gs$ as a module over $\ff[\gs];$

\item
if the Jordan algebra $J$ is simple, then either $J=\ff$ or
$J\cong\calh(A,*)$  the set of symmetric elements $x^*=x,$ where $A$
is a {\bf simple} algebra as in Theorem \ref{thm assoc}$(7)$, and
$*$ is as in Theorem \ref{thm assoc}$(8)$.
 $($Of course $\calh(A,*)$ is a
subalgebra of $A^+)$.
\end{enumerate}
\end{thm}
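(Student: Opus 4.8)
The plan is to reduce to the associative case via Theorem \ref{thm assoc}. Since $J$ is generated by the two elements $a,b$, Shirshov's theorem shows $J$ is special, so we may take $J\subseteq A^{+}$ with $A$ generated by the (distinct, idempotent) images of $a$ and $b$; thus $A$ is an algebra as in Theorem \ref{thm assoc}. In $A$, $\sigma=a^{2}-ab-ba+b^{2}=a+b-ab-ba$, hence in $A^{+}$
\[
a\cdot b=\tfrac12(ab+ba)=\tfrac12(a+b-\sigma),\qquad ba=a+b-\sigma-ab .
\]
As $\sigma$ is central in $A$ by Theorem \ref{thm assoc}(1), for $x,y\in J$ we get $\sigma\cdot x=\sigma x$, $\sigma\cdot(x\cdot y)=\tfrac12(\sigma xy+\sigma yx)=(\sigma\cdot x)\cdot y$, and multiplication by $\sigma$ commutes with every multiplication; these are the conditions for $\sigma\in Z(J)$ (Definition \ref{defs main}(5)), giving (1). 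For (2), let $M=\ff[\sigma]\sigma+\ff[\sigma]a+\ff[\sigma]b$; the displayed identities and centrality of $\sigma$ (e.g.\ $(p(\sigma)a)\cdot(q(\sigma)b)=p(\sigma)q(\sigma)(a\cdot b)$) show $M$ is a subalgebra of $J$ containing the generators $a,b$, so $J=M$.

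For (3), assume $J$ is simple. If $J\cong\ff$ we are in the first alternative; otherwise $\dim_{\ff}J>1$. The key step is to prove $\ff[\sigma]$ is a field with $\sigma\neq0,\e$. Since $\sigma$ is central in the simple algebra $J$, multiplication by $\sigma$ lies in the centroid $\Gamma(J)$, which is a field; this forbids zero-divisors in $\ff[\sigma]$ (a relation $uv=0$ with $u,v\neq0$ would, by simplicity of $J$, make $u$ act on $J$ either as $0$ --- forcing $u=0$ --- or invertibly --- forcing $v=0$). Moreover $\ff[\sigma]\cong\ff[t]$ is impossible: then $J=\ff[\sigma]\sigma+\ff[\sigma]a+\ff[\sigma]b$ is a finitely generated $\ff[\sigma]$-module, for each maximal ideal $\frakm$ the ideal $\frakm J$ of $J$ is nonzero (as $m\cdot\sigma=m\sigma\neq0$ for $0\neq m\in\frakm$) hence equals $J$ by simplicity, and Nakayama's lemma then gives $J=0$, a contradiction. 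So $\ff[\sigma]$ is a field. Next, if $\sigma=0$ then $J=\ff a+\ff b$ by (2) and $a\cdot(a-b)=b\cdot(a-b)=\tfrac12(a-b)$, so $\ff(a-b)$ is a proper nonzero ideal of $J$ --- impossible; if $\sigma=\e$ then $\dim_{\ff}J\le3$ by (2), Theorem \ref{thm assoc}(3) gives $aba=bab=0$, and either $a\cdot b=0$ (so $b=\e-a$ and $\ff a$ is a proper nonzero ideal) or $a\cdot(a\cdot b)=b\cdot(a\cdot b)=\tfrac12(a\cdot b)$ exhibits $\ff(a\cdot b)$ as a proper nonzero ideal --- impossible. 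Hence $\sigma\neq0,\e$.

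By Theorem \ref{thm assoc}(6), $A\cong M_{2}(\ff[\sigma])$, which is simple, unital and noncommutative, so Theorem \ref{thm assoc}(8) makes $*$ an involution of $A$; since $*$ fixes $a,b,\sigma$ and hence $\ff[\sigma]$, part (2) gives $J\subseteq\calh(A,*)=:H$. As $\sigma$ is invertible in the field $\ff[\sigma]$ we have $\ff[\sigma]=\ff[\sigma]\sigma\subseteq J$, so $J$ is a unital $\ff[\sigma]$-subalgebra of $H$. The involution $*$ is not symplectic (else $H=\ff[\sigma]\e$ is commutative, forcing $A$ commutative), so $H\cong\calh_{2}(\ff[\sigma])$; relative to the idempotent $a$ (which is $\neq0,\e$, the degenerate cases again contradicting $J\not\cong\ff$ and simplicity) the Peirce components $H_{1}(a)=\ff[\sigma]a$ and $H_{0}(a)=\ff[\sigma](\e-a)$ lie in $J$, while $H_{1/2}(a)$ is one-dimensional over $\ff[\sigma]$. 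Writing $b=b_{1}+b_{1/2}+b_{0}$ in the Peirce decomposition of $a$, we have $b_{1},b_{0}\in J$, hence $b_{1/2}\in J$, and $b_{1/2}\neq0$ (otherwise $a,b,\sigma$ all lie in the commutative subalgebra $\ff[\sigma]\e+\ff[\sigma]a$, contradicting noncommutativity of $A$). Thus $H_{1/2}(a)=\ff[\sigma]b_{1/2}\subseteq J$, so $J=H=\calh(A,*)$ with $A\cong M_{2}(\ff[\sigma])$ simple as in Theorem \ref{thm assoc}(7) and $*$ as in Theorem \ref{thm assoc}(8).

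The main obstacle is the middle paragraph: using simplicity of $J$ to pin down $\ff[\sigma]$ as a field with $\sigma\neq0,\e$, the least routine point being the exclusion of a ``transcendental'' $\sigma$. Once $A\cong M_{2}(\ff[\sigma])$ is in hand, the rest is routine bookkeeping with Theorem \ref{thm assoc} and the Peirce decomposition.
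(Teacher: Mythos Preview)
Your proofs of (1) and (2) match the paper's. For (3), however, you take a genuinely different route. The paper's argument is a two-line reduction: since $J$ is simple, any ideal $I$ of $A$ meets $J$ in $0$ or $J$; taking $I$ maximal among ideals of $A$ with $I\cap J=0$ (Zorn), one checks that $A/I$ is simple (any strictly larger ideal contains $J$, hence $a,b$, hence all of $A$), and after replacing $A$ by $A/I$ one is exactly in the situation of Theorem~\ref{thm assoc}(7) and~(8), with $J=\calh(A,*)$ immediate. You instead keep the original envelope $A$ and argue \emph{directly} that $\ff[\gs]$ is a field with $\gs\neq 0,\e$, via the centroid of $J$ and a Nakayama-type argument, then invoke Theorem~\ref{thm assoc}(6). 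This is correct but noticeably longer; note that your ``forcing $u=0$'' step needs the observation that if $p(\gs)J=0$ then $p(\gs)\in J$ (its constant term must vanish, else $-\alpha_0^{-1}(p(\gs)-\alpha_0)$ would be an identity for $A$) and $\ff p(\gs)$ is a square-zero ideal of the simple algebra $J$, and your Nakayama step should be the determinant-trick version (from $\frakm J=J$ one gets $(1-r)J=0$ with $r\in\frakm$, contradicting the previous point). The payoff of the paper's route is that all of this work is already encapsulated in Theorem~\ref{thm assoc}(7); your route reproves that theorem's content inside $J$.

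Your final Peirce-decomposition argument for $J=\calh(A,*)$ is also more than you need. Once you know $\ff[\gs]$ is a field and $\gs\neq 0$, you have $\e=\gs\cdot\gs^{-1}\in\ff[\gs]\gs\subseteq J$, so $\ff[\gs]\gs=\ff[\gs]\e$ and part~(2) gives $J=\ff[\gs]\e+\ff[\gs]a+\ff[\gs]b$. On the other side, since $\{\e,a,b,ab\}$ is an $\ff[\gs]$-basis of $A\cong M_2(\ff[\gs])$ and $*$ fixes $\e,a,b$ but sends $ab\mapsto ba\neq ab$, one reads off $\calh(A,*)=\ff[\gs]\e+\ff[\gs]a+\ff[\gs]b$ directly, so $J=\calh(A,*)$ without any appeal to Peirce components or to the orthogonal/symplectic dichotomy.
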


\section{Proofs of Theorem \ref{thm assoc} and Theorem \ref{thm jordan}}

Before we prove Theorems \ref{thm assoc} and   \ref{thm jordan}
we need a few definitions, the statement of the Shirshov-Cohn Theorem,
and a few lemmas.

\begin{Defs}\label{defs main}
\begin{enumerate}
\item
A (linear) algebra is just an algebra over a commutative unital ring $\ff$ in the usual sense
(but not necessarily associative).

\item
For an algebra $(A,\circ),$ the {\it commutator} is $[x,y]:=x\circ y-y\circ x$ and the {\it associator}
is $[x,y,z]:= (x\circ y)\circ z - x\circ (y\circ z).$

\item
The {\it nucleus} of an algebra is the part that associates with
everything, consisting of the elements associating in all possible
ways with all other elements:
\[
\caln uc(A) := \{x\in A \mid [x,A,A] = [A, x,A] = [A,A, x] = 0\}.
\]
\item
The {\it center} of any algebra is the  part of the algebra which
both commutes and associates with everything, i.e., those nuclear
elements commuting with all other elements:
\[
\cent(A) := \{c \in \caln uc(A) \mid [c,A] = 0\}.
\]

\item
Recall that for an associative algebra $A,$ the Jordan algebra $A^+$
is defined by $x\cdot y=\half(x y+y x)$.
Any subalgebra of a Jordan algebra of
type $A^+$ is called {\it special}.
\end{enumerate}
\end{Defs}

\begin{thm}[Shirshov-Cohn Theorem, Theorem 10, p.~48 in \cite{Jacobson}]\label{thm sc}
Any Jordan algebra over a field $\ff$ of characteristic not $2$ $($with $\e)$ generated by
two elements $($and $\e)$ is special.
\end{thm}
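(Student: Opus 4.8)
The plan is to deduce the statement from the two classical building blocks behind it --- Cohn's description of the free special Jordan algebra on two generators and Shirshov's theorem that the free Jordan algebra on two generators has no extra relations --- followed by a quotient argument. Let $\mathfrak{J}$ denote the free unital Jordan $\ff$-algebra on two generators $x,y$, and let $SJ$ be the Jordan subalgebra of the free unital associative algebra $\ff\langle x,y\rangle$ generated by $x,y$ under $u\cdot v=\half(uv+vu)$; this $SJ$ is the free unital \emph{special} Jordan algebra on $x,y$, and the identity map on $\{x,y\}$ induces a surjection $\pi\colon\mathfrak{J}\onto SJ$. Any unital Jordan algebra $J$ generated by $a,b$ (and $\e$) is a homomorphic image of $\mathfrak{J}$, so it is enough to show (i) $\pi$ is an isomorphism, and (ii) every homomorphic image of $SJ$ is special.

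For (i) I would first prove \emph{Cohn's theorem}: if $*$ is the reversal involution of $\ff\langle x,y\rangle$ (fixing $x$ and $y$), then $SJ=\calh(\ff\langle x,y\rangle,*)$, the set of all $*$-symmetric elements. The inclusion ``$\subseteq$'' is immediate; for ``$\supseteq$'', observe that $\calh(\ff\langle x,y\rangle,*)$ is spanned by the palindromic monomials $m=m^*$ and the ``reversible'' elements $m+m^*$, and show by induction on degree that each lies in $SJ$, using the special triple product identity $(u\cdot v)\cdot w+(w\cdot v)\cdot u-(u\cdot w)\cdot v=\half(uvw+wvu)$ to strip outer letters off a palindrome and the Jordan product to build $m+m^*$ out of shorter reversible pieces.

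Then comes the heart of the matter, \emph{Shirshov's theorem}: $\pi$ is injective, i.e.\ the free Jordan algebra on two generators is already special. I would prove this by setting up a normal form for $\mathfrak{J}$: starting from the linearizations of the Jordan operator identities (e.g.\ of $x^{2}\cdot(x\cdot y)=x\cdot(x^{2}\cdot y)$ and of $U_{x}U_{y}U_{x}=U_{U_{x}y}$, with $U_{h}z:=2h\cdot(h\cdot z)-h^{2}\cdot z$), one produces an $\ff$-spanning set of $\mathfrak{J}$ that $\pi$ carries bijectively onto a known $\ff$-basis of $SJ$, whence $\pi$ must be injective. (Alternatively one can invoke Macdonald's principle --- a Jordan identity in three variables that is linear in one of them and holds in all special Jordan algebras holds in all Jordan algebras --- to conclude directly that $\mathfrak{J}$ satisfies no relation beyond those forced by speciality.) This step is the main obstacle: it is a delicate multilinear induction, and it is precisely where the passage from two to three generators collapses --- Glennie's $s$-identities live in three variables, so no blanket speciality statement survives there.

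Finally, for (ii): write $J\cong SJ/I=\calh(A,*)/I$ with $A=\ff\langle x,y\rangle$, and let $K$ be the associative ideal of $A$ generated by $I$. Since $I=I^{*}$ we have $K=K^{*}$, so $*$ descends to $A/K$ and $h\mapsto h+K$ gives a Jordan homomorphism $\calh(A,*)\to(A/K)^{+}$ with kernel $\calh(A,*)\cap K$. The theorem then reduces to $\calh(A,*)\cap K=I$; since $I\subseteq K$, only ``$\subseteq$'' needs proof. Here one uses that $x$ and $y$ are themselves symmetric, so $A$ is generated as an algebra by $\calh(A,*)$: every element of $K$ is a sum of terms $a\,r\,b$ with $r\in I$ and $a,b$ products of symmetric elements, and a symmetric such sum can be rewritten --- using $\charc\ff\ne2$ and the operator identities $U_{h_{1}}\cdots U_{h_{k}}(r)=(h_{1}\cdots h_{k})\,r\,(h_{k}\cdots h_{1})$ --- as a sum of elements of the Jordan ideal $I$. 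Thus $J$ embeds into $(A/K)^{+}$ and is special, completing the proof.
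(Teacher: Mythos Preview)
The paper does not prove this statement; it is quoted with a reference to Jacobson's book and then invoked as a black box in the proof of Theorem~\ref{thm jordan}. So there is no in-paper argument to compare your sketch against.

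Your outline follows the classical route and is essentially correct: identify the free special Jordan algebra $SJ$ on two generators with $\calh(\ff\langle x,y\rangle,*)$ (Cohn), show that the canonical surjection from the free Jordan algebra $\mathfrak{J}$ onto $SJ$ is an isomorphism (Shirshov), and then check that every quotient of $SJ$ remains special by passing to the associative ideal $K$ generated by the kernel. One point deserves more care: in step~(ii), the assertion that a \emph{symmetric} element of $K$ can be rewritten inside the Jordan ideal $I$ is precisely Cohn's reversibility lemma, and it needs its own induction on word length --- the single identity $U_{h_1}\cdots U_{h_k}(r)=h_1\cdots h_k\, r\, h_k\cdots h_1$ only covers the case where the left and right flanking words are reverses of one another, whereas a general symmetric sum $\sum_i a_i r_i b_i$ mixes terms whose individual summands need not be symmetric. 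That is a refinement rather than a gap; the architecture of your argument is the standard one found in Jacobson's text.
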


\begin{notation}\label{not main}
From now on we fix two {\it distinct} idempotents $a,b$ in the algebra
$A$ over a unital commutative ring $\ff$ ($A$ will be either associative,
or $A=J$ a Jordan algebra, and then $\ff$ is a field of characteristic not $2$).
We assume that $A$ is generated by $a$ and $b$ as an algebra over $\ff$
(but we do not assume that $A$ is unital).
Let
\[
\gs:=(a-b)^2=a+b-(a b+b a).
\]
\end{notation}

We need a few computations.

\begin{lemma}\label{lem assoc}
Assume that $A$ is associative (so multiplication in $A$ is denoted: $x y$).
\begin{enumerate}
\item
$\gs a=a-a b a=a\gs.$

\item
$\gs b=b-b a b=b\gs.$

\item
$a b a=(\e-\gs) a$ and $b a b=(\e-\gs) b.$

\item
$(\e-a) (\e-b) (\e-a)=(\e-\gs) (\e-a)$ and  $(\e-b) (\e-a) (\e-b)=(\e-\gs) (\e-b).$
\end{enumerate}
\end{lemma}
\begin{proof}
We have
$\gs a=(a+b-a b-b a) a=a+b a-a b
a-b a=a-a b a$.  Also $a \gs=a (a+b-a b-b a)=a+a b-a b-a
b a=a-a b a.$  Hence (1) holds.  Part (2) holds by symmetry.  Part (3) follows
from (1) and (2).  For part (4) notice that $x:=\e-a$ and $y:=\e-b$ are
idempotents in $A^{(1)}$ and $(x-y)^2=\gs$.  Hence, as in (3),
we get (4).
\end{proof}

\begin{lemma}\label{lem assoc comm}
Assume that $A$ is associative and commutative.  Then
\begin{enumerate}
\item
$\gs(a-b)=a-b,$ in particular $\gs^2=\gs;$

\item
$\gs a b=0;$

\item
$A=\ff\gs a\oplus\ff\gs b\oplus\ff a b$.
\end{enumerate}
\end{lemma}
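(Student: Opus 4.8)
The plan is to push Lemma~\ref{lem assoc} through the extra hypothesis of commutativity, which forces $aba = a^2 b = ab$ and $bab = ab^2 = ab$. Combined with parts~(1)--(3) of Lemma~\ref{lem assoc}, and working inside $A$ (the $\e$-terms below cancel, so nothing leaves $A$), this gives
\[
a - \gs a = (\e-\gs)a = aba = ab = bab = (\e-\gs)b = b - \gs b .
\]
In particular $ab = a - \gs a = b - \gs b$, which will be the workhorse identity.

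For part~(1), I would subtract the two expressions $a - \gs a = b - \gs b$ to get $a - b = \gs a - \gs b = \gs(a-b)$; multiplying on the right by $a-b$ and using $(a-b)^2 = \gs$ yields $\gs^2 = \gs$. For part~(2), $\gs(ab) = \gs(a - \gs a) = \gs a - \gs^2 a = \gs a - \gs a = 0$, using part~(1).

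For part~(3) I would first establish spanning: $A$ is generated by $a,b$, and commutativity together with $a^2 = a$, $b^2 = b$ collapses every product of the generators to $a$, $b$, or $ab$, so $A = \ff a + \ff b + \ff(ab)$; since $\gs a = a - ab$ and $\gs b = b - ab$, the submodule $\ff(\gs a) + \ff(\gs b) + \ff(ab)$ already equals $\ff a + \ff b + \ff(ab) = A$. For directness, put $e_1 = \gs a$, $e_2 = \gs b$, $e_3 = ab$; using $\gs^2 = \gs$, $\gs(ab) = 0$, $a^2 = a$, $b^2 = b$ one verifies $e_i e_j = \delta_{ij}\, e_i$, so the $e_i$ are pairwise orthogonal (possibly zero) idempotents. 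If $\ga_1 e_1 + \ga_2 e_2 + \ga_3 e_3 = 0$ with $\ga_i \in \ff$, then multiplying by $e_i$ forces $\ga_i e_i = 0$ for each $i$, which is precisely the assertion that the sum $\ff e_1 \oplus \ff e_2 \oplus \ff e_3$ is direct.

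No step is a genuine obstacle here; the proof is essentially bookkeeping. The one thing to watch is that every relation extracted from Lemma~\ref{lem assoc}, which is stated in $A^{(1)}$, is rewritten in a form valid in $A$ itself (since $A$ need not be unital); the mild ``aha'' is noticing that $\gs a$, $\gs b$, $ab$ are orthogonal idempotents, which makes the directness immediate.
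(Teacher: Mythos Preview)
Your proof is correct and follows essentially the same route as the paper: both derive $ab=a-\gs a=b-\gs b$ from Lemma~\ref{lem assoc} plus commutativity, deduce (1) by subtraction, then (2), and for (3) obtain directness by multiplying a relation by each of $\gs a,\ \gs b,\ ab$ in turn. The only cosmetic differences are that the paper proves (2) via $ab=(ab)b=(a-\gs a)b=ab-\gs ab$ rather than via $\gs^2=\gs$, and obtains spanning by checking that the span of $\gs a,\gs b,ab$ is closed under multiplication; your explicit observation that $\gs a,\ \gs b,\ ab$ are pairwise orthogonal idempotents is a nice way to package both the directness and the ideal decomposition at once.
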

\begin{proof}
We use Lemma \ref{lem assoc}.  We have $a b=a b a=a-\gs a$.
Similarly $b a=b-\gs b$.  Hence $a-\gs a=b-\gs b,$ so $a-b=\gs(a-b),$
and the first part of (1) holds.
Multiplying by $a-b$ we get (1).
Also $a b=(a b) b=a b-\gs a b,$ so $\gs a b=0,$
and   (2) holds.

Let $W$ be the $\ff$-linear combination of $\gs a,\, \gs b$ and $a b$.
Then\linebreak $a=ab+\gs a\in W$ and similarly $b\in W$.  Clearly, by (2), $W$ is closed
under multiplication, so $W=A$.  Suppose $\ga(\gs a)+\gb(\gs b)+\gc(a b)=0,$
with $\ga,\gb,\gc\in\ff$.
Multiplying by $\gs a$ and using (1) and (2) we get that $\ga(\gs a)=0$.
Similarly $\gb(\gs b)=0,$ and then $\gc(a b)=0$.  Also,
by (2), the sum is a direct sum of ideals, so (3) holds.
\end{proof}

In the next lemma, by a simple ring $R$
we mean a ring (not necessarily unital) such
that  $R^2\ne 0$ and the only proper  ideal
of $R$ is $\{0\}$.  This lemma is well known.
We include a proof for the convenience
of the reader.

\begin{lemma}\label{lem simple}
Let $R$ be a simple ring that satisfies a polynomial identity. Then
$R\cong M_n(D),$ for some division ring $D$.  In particular $R$ is
unital.
\end{lemma}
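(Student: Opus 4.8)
The plan is to derive the structure of $R$ in three stages: first squeeze out the elementary consequences of simplicity, then manufacture an identity element (since $R$ is not assumed unital), and finally feed the resulting simple unital PI ring into the standard structure theory of PI rings.

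First I would record the following elementary facts, all immediate from $R$ having no proper nonzero ideal. Since $R^2$ is a two-sided ideal and $R^2\neq 0$, we get $R^2=R$; in particular $R$ is a prime ring. The left annihilator $\{x\in R: xR=0\}$ and the right annihilator $\{x\in R: Rx=0\}$ are two-sided ideals, and neither can equal $R$ (that would force $R^2=0$), so both are $0$. Next, for a fixed nonzero $c\in\cent(R)$ (note $\cent(R)\subseteq R$), the set $\{x\in R: xc=0\}$ is a two-sided ideal by centrality of $c$; it is proper because $c$ lies outside the right annihilator of $R$, hence it is $0$, i.e.\ right multiplication by $c$ is injective on $R$. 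Also $Rc$ is a nonzero two-sided ideal, so $Rc=R$. Thus multiplication by any nonzero central element is a bijection of $R$ onto itself.

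Next I would invoke the classical theorem that a prime ring satisfying a (multilinear, suitably normalized) polynomial identity has nonzero center, to obtain some $0\neq c\in\cent(R)$. Using the bijectivity just established, choose $e\in R$ with $ec=c$. For every $x\in R$ one has $(xe-x)c=x(ec)-xc=xc-xc=0$, so $xe=x$ by injectivity; symmetrically $ex=x$ for all $x$ (here one uses that left multiplication by $c$ coincides with right multiplication by $c$, since $c$ is central). Hence $e$ is a two-sided identity and $R$ is unital. Finally, for any nonzero $z\in\cent(R)$ we have $e\in R=Rz$, so $z$ is invertible in $R$, and its inverse is again central; therefore $Z:=\cent(R)$ is a field and $R$ is a $Z$-algebra.

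It then remains to show a simple unital $Z$-algebra $R$ satisfying a polynomial identity has the form $M_n(D)$. Here I would argue that $R$ is primitive: a maximal left ideal exists by Zorn's lemma, it contains no nonzero two-sided ideal because $R$ is simple, so the associated simple left module is faithful. Kaplansky's theorem then says a primitive PI ring is simple Artinian; by Wedderburn--Artin, $R\cong M_n(D)$ for some division ring $D$ (necessarily finite-dimensional over its center $Z$), which is in particular unital. The main obstacle is not any of the ideal-chasing but the two nonelementary inputs from PI theory --- ``prime PI rings have nonzero center'' and Kaplansky's theorem on primitive PI rings; the one point where the elementary part needs care is the bootstrapping step above, where the identity element must be produced before the structure theorems can legitimately be applied, precisely because $R$ was not assumed to contain $\e$.
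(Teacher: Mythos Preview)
Your proof is correct, but it follows a genuinely different route from the paper's. The paper argues \emph{externally}: it adjoins a unit to form $R^{(1)}$ over $\zz$, uses Jain's theorem that a PI ring cannot equal its own Jacobson radical to see that $R\cap J(R^{(1)})=0$, then passes to a primitive quotient $S$ of $R^{(1)}/J(R^{(1)})$ in which $R$ survives, observes that $S$ (as a central extension of $R$) inherits a multilinear PI, and finally applies Kaplansky to $S$ so that $S$ is simple and hence $S=R$. You instead argue \emph{internally}: you invoke the Rowen/Posner-type theorem that a prime PI ring has nonzero center, and from a single nonzero central element you manufacture the unit of $R$ directly, after which Kaplansky applies to $R$ itself.

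Both arguments ultimately rest on Kaplansky's theorem plus one nontrivial PI input, but the inputs differ: the paper uses ``a PI ring is not radical'' (Jain), while you use ``a prime PI ring has nonzero center.'' Your route is shorter and avoids the embedding/quotient gymnastics; the paper's route avoids any appeal to central polynomials or the nonzero-center theorem and instead leans on the radical theory. One small point worth making explicit in your write-up is that the nonzero-center theorem you invoke is usually stated for unital rings, so you should either cite a version valid without unit or note that the standard proof via central polynomials goes through verbatim for prime rings without $\e$.
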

\begin{proof}
We show that $R$ is contained as an ideal
in a unital primitive ring $S$ that satisfies
a (multilinear) polynomial identity.
By Kaplansky's Theorem \cite[Theorem 23.31]{Row08}, $S$ is a simple ring which is finite
dimensional over its center (which is a field).
Since $S$ is simple and since $R$ is an ideal of $S$ we have $R=S$.
By Wedderburn's Theorem $R\cong M_n(D)$ as asserted.

If $R$ is unital, take $S=R$
(a simple unital ring is primitive).
So  suppose $R$ is not unital.  Now
$R$ is an algebra over the integers and we
let  $R^{(1)}$ be the ring defined above (adjoining
an identity $\e$ to $R$).  We identify $R$ with the ideal $\{(0,r)\mid r\in R\}$.
Consider the Jacobson Radical
$J(R^{(1)})$.  Since $R$ is an ideal of $R^{(1)}$ we have
$J(R)=J(R^{(1)})\cap R$ (\cite[Theorem 1.2.5]{herstein}).
Hence if $J(R^{(1)})\supseteq R,$
then $J(R)=R$ (i.e.~$R$ is a radical ring).
However, by \cite[Theorem 4.2]{Ja}, since
$R$ satisfies a polynomial identity, $J(R)\ne R$.
Since $R$ is simple, we see that $R\cap J(R^{(1)})=\{0\}$.

Let $S:=R^{(1)}/J(R^{(1)})$.  Then $R$ embeds in $S$ and
we consider $R$ as a subring of $S$.  Since $J(S)=\{0\},$
and since $J(S)$ is the intersection of all primitive
ideals of $S,$ there exists a primitive ideal $P$ of $S$
that does not contain $R,$ and hence intersects $R$ in $\{0\}$.
Replacing $S$ by $S/P$  we may assume
that $S$ is primitive.  Now since $R$ satisfies a polynomial
identity, it satisfies a multilinear polynomial identity
(\cite[Lemma 6.2.4]{herstein}).
Since $S$ is a central extension of $R,$ \cite[Proposition 23.8(i)]{Row08}
shows that $S$ satisfies a multilinear polynomial identity,
so we are done.
\end{proof}

\begin{proof}[{\bf Proof of Theorem \ref{thm assoc}}]
(1):\quad This follows from Lemma \ref{lem assoc}(1\&2).
\medskip

\noindent
(2): Let
\[
V = \ff[\gs]\gs + \ff[\gs]a  +\ff[\gs]b+  \ff[\gs]ab,
\]
the set of $\ff[\gs]$ linear combinations
of $\gs, a, b$ and $a b$.
We show that $V$ is a
subalgebra of $A$.  Since it contains $a$ and $b,$ this will show
that $A=V$.

Since $\gs\in {\rm Cent}(A)$   to show that $V$ is a subalgebra of $A$ it
suffices to show that $b a,\ a b a,\ b a
b\in V$, but this follows from Lemma \ref{lem assoc}, and from the fact that
$b a=-\gs+a+b-a b$.
The last part of (2) follows from \cite[Proposition 23.11]{Row08}.
\medskip

\noindent
(3):\quad
Suppose $\gs=\e$.  By Lemma \ref{lem assoc}(3),
$a b a=b a b=0$. If $a b=b a=0$
then it is easy to check that $A=\ff a\oplus\ff b$. Suppose $a b\ne 0$.  Then $(a b)^2=0$
and we see that $\ff(a b)$ is closed under multiplication by $a$ and $b$
from both sides, so (3) holds.
\medskip

\noindent
(4):\quad
Suppose $\gs=0$.  Then, by Lemma \ref{lem assoc}(3), $a b a=a$ and $b a b=b$.
Let $x:=a (\e-b), y:=b(\e-a)$ and $I:=\ff x+\ff y$.  If $x=y=0,$  then since $a-b\ne 0,$ we see that $\ff(a-b)$
is a nontrivial square-zero ideal.  Assume  $x\ne 0$.
Then $b x=b a (\e-b)=b a-b=-y,\ x b=0, a x=x$ and $x a=0$.
Similarly $a y, y a, b y, y b\in I$.  Also $x^2=y^2=x y=y x=0$.
Hence $I^2=\{0\}$ and (4) holds.
\medskip

\noindent
(5):\quad
In (3) we take $I = \ff a b+\ff b a$.
In (4) if  $x=y=0$ we take $I = \ff(a-b)$, since then the images of
$a$ and $b$ are equal.  So suppose $x\ne 0$.  Let $z=a b-b a$
and take $I=\ff (x+y)+\ff z$.
Note that $a b$ and
$b a$ are idempotents and since $v w v=v,$ for $\{v,w\}=\{a,b\},$
we see that $z^2=a b-a+b a-b=-x-y$.
It is easy to check that $z x=x+y,\, x z=0,\, z y=-x-y$
and $y z=0$.  Hence $I^2=\ff(x+y)$.  Now, by the above, and by the proof of (4), $I(x+y)=(x+y)I=\{0\}$.
Hence $I^3=\{0\},$ and $A/I$ is abelian.
\medskip

\noindent
(6):\quad
Set
$e_{1,1} = a,\ \ e_{1,2} =   a b (\e-a),\ \ e_{2,1} =
  (\gs (\e-\gs))^{-1}(\e-a) b a$ and $e_{2,2} = \e-a$.
Then
\[
e_{1,1} e_{i,j}=\gd_{1,i}e_{1,j}\quad\text{and}\quad e_{2,2} e_{i,j}=\gd_{2,i}e_{2,j}.
\]
Also,
\[
e_{1,2} e_{1,1}=0=e_{1,2} e_{1,2}\text{ and }e_{2,1} e_{2,2}=0=e_{2,1} e_{2,1}.
\]
Next, using Lemma \ref{lem assoc}(1\&3),
\begin{gather*}
\gs (\e-\gs)  e_{1,2} e_{2,1}  = a  b  (\e-a) b  a =  a  b
a - a  b  a  b  a \\
 = a  b  a -  a
 b  (a - \gs a) = \gs a  b  a = \gs
  (\e-\gs)  a,
\end{gather*}
Since $\gs(\e-\gs)$ is invertible in $A$ we get $e_{1,2} e_{2,1} = a = e_{1,1}$.

Similarly, using Lemma \ref{lem assoc}(3\&4),
\begin{gather*}
\gs (\e-\gs)   e_{2,1} e_{1,2}  =
(\e-a)  b  a b   (\e-a) =  (\e-a)  ( b-\gs b)
 (\e-a) \\  = (\e-\gs) (\e-a) (\e-b)(\e-a)=\gs (\e-\gs) (\e-a),
\end{gather*}
Since $\gs(\e-\gs)$ is invertible in $A$ we get $e_{2,1}  e_{1,2} = e_{2,2}.$ Thus the $e_{i,j}$ are
$2\times 2$ matrix units generating $A$ over $\ff[\gs]$ (see Definition 13.3 in \cite{Row08}).
By  \cite[Proposition~13.9]{Row08},  $A\cong M_2(R),$ where $R=a A a$.  Since $a b a=(\e-\gs)a$
(Lemma \ref{lem assoc}(3)), and by part (2), $R=\ff[\gs]a$.
Note now that $\ff[\gs]a\cong \ff[\gs],$ because if $\ga a=0,$ for some $\ga\in\ff[\gs],$
then $e_{2,1} \ga a e_{1,2}=\ga e_{2,2}=\ga (\e-a)=0,$ and then $\ga=0$.
Hence the map $\ga\mapsto\ga a$ is an isomorphism $\ff[\gs]\to R$ and (6) holds.
\medskip

\noindent
(7):\quad Suppose
next that $A$ is simple.  We may assume without loss that $a\ne 0$.  If $A$ is commutative, then $A=Aa,$ and $a=\e$
is the identity of $A$.  Thus $A$ is a field so $b=0$ and it follows that
$A=\ff\e$.

Suppose that $A$ is not commutative.
If $A$ is a division ring, then $\{a,b\}=\{\e, 0\}$
and $A=\ff\e$ is commutative, a contradiction.

By (2), $A$
satisfies a multilinear polynomial identity.
By Lemma \ref{lem simple}, $\e\in A,$
and $A\cong M_n(D)$ for some division ring $D$.
Let $\kk:={\rm Cent}(A)$.  Then $\kk$ is a field,
and by (2) the dimension of $A$ over $\kk$ is
at most $4$.  Since this dimension is a square which is not $1,$
it is $4$.

Hence  $\{\gs, a, b, a b\}$ are linearly
independent over $\kk$.  It follows that  $\{\gs, a, b, a b\}$ are linearly
independent over $\ff[\gs]\subseteq \kk$.  Now if $\gs$ is transcendental over $\ff,$
then $\ff[\gs]$ has a proper non-trivial ideal $I$.  And then
$I\gs+Ia+Ib+I(a b)$ would be a proper nontrivial ideal of $A,$
a contradiction.  Hence $\ff[\gs]$ is a field.
Since $A$ is simple and contains idempotents, $\gs\notin\{0,\e\},$
by (3) and (4).  Hence $\gs-\gs^2$ is invertible in $A,$
so $A\cong M_2(\ff[\gs])$ by (6).
\medskip

\noindent
(8):\quad
Suppose that $*$ is an involution on $A$.  Assume that
$\ga_{\gs}\gs +\ga_a a+\ga_b b+\ga_{a b}(a b)=0$.
Then also $\ga_{\gs}\gs +\ga_a a+\ga_b b+\ga_{a b}(b a)=0$.
Subtracting we get $\ga_{a b}(b a-a b)=0$.  Thus
condition $(i)$ of (8) holds.

Suppose condition $(i)$ of (8) holds, and assume that
\[
\ga_{\gs}\gs +\ga_a a+\ga_b b+\ga_{a b}(a b)=0.
\]
Then
\begin{gather*}
\ga_{\gs}\gs +\ga_a a+\ga_b b+\ga_{a b}(b a)=\\
\ga_{\gs}\gs +\ga_a a+\ga_b b+\ga_{a b}(a b)+\ga_{a b}(b a-a b)=\ga_{a b}(b a-a b).
\end{gather*}
By condition $(i),\ \ga_{a b}(b a-a b)=0,$ so $\ga_{\gs}\gs +\ga_a a+\ga_b b+\ga_{a b}(b a)=0$.
This shows that $*$ is well defined, and it is easy to check that it is an involution on $A$.

For the last part of (8), see Remark \ref{rem thm 1.1}(3) below
and note that $\ff[\gs]$ is a field, and $A$ is $4$-dimensional over $\ff[\gs]$.
\end{proof}

\begin{remarks}\label{rem thm 1.1}
\begin{enumerate}
\item
By \cite[Theorem 4]{L}, the converse of Theorem \ref{thm assoc}(3)
also holds, namely if $A=M_2(\kk)$ where $\kk$ is a finite simple field
extension of $\ff,$ then $A$ is generated over $\ff$ by two idempotents,
except in the case where $\kk=\ff_2,$ the field of two elements.

\item
Suppose that $a b=0$.  Then, by Lemma \ref{lem assoc}(1\&2),
$\gs=\e,$ and then $b a=-\e+a+b$.  Note that $\e(b a-a b)=b a,$ is
not necessarily $0,$ so it may happen that $*$ of Theorem \ref{thm
assoc}(8) is not an involution on $A$.

\item
Of course if $\{\gs, a, b, a b\}$ are independent over $\ff[\gs]$
then $*$ of Theorem \ref{thm assoc}(8) is an involution on $A$.
\end{enumerate}
\end{remarks}

\begin{proof}[{\bf Proof of Theorem \ref{thm jordan}}]
By the Shirshov-Cohn theorem, $J$ is a special algebra contained in
$A^+,$ where $A$ is an associative algebra generated over $\ff$ by
the idempotents $a$ and $b$.
\medskip

\noindent
(1):\quad
This follows immediately from Theorem \ref{thm assoc}(1).
\medskip

\noindent
(2):\quad
By (1), and since $a\cdot b=-\half(\gs+a+b),$  it follows that
the set of $\ff[\gs]$-linear combinations of $\gs, a, b$
is closed in $J$ under multiplication, and hence it is equal to $J$.
\medskip

\noindent
(3):\quad
Assume that $J$ is simple.  If $A$ is commutative,
then $J=A$ is a field, so $J=\ff$.  So assume that
$A$ is not commutative.  Let $I$ be a maximal ideal of $A$
not containing $J$.  Since $J$ is simple, $J\cap I=\{0\}$.
Hence we may replace $A$ with the simple associative
algebra $A/I$. Hence we may assume that
$A$ is simple. By Theorem \ref{thm assoc}(8), $*$ is an
involution on $A,$ and one easily checks
that $J=\calh(A,*)$.
\end{proof}

\section{Some additional results for the case where $\ff$ is a field
and $A$ is associative}

In this section we continue with Notation \ref{not main}.
We further assume that $A$ is  associative and that $\ff$
is a field.

\begin{prop}\label{prop alg or free}
Exactly one of the following holds:
\begin{enumerate}
\item[(a)]
$A$ is finite dimensional over $\ff,$
and $\gs$ is algebraic over $\ff$ (i.e.~it satisfies a polynomial in $\ff[\gl]$), or

\item[(t)]
$A$ is infinite dimensional over $\ff$ and $\gs$ is transcendental over $\ff$.
In this case
$A$ is isomorphic to the semigroup algebra of the free product $\lan a\ran*\lan b\ran$
of the one-element semigroups $\lan a\ran,\, \lan b\ran$.
\end{enumerate}
\end{prop}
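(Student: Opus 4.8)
The plan is to show that the two alternatives are mutually exclusive and exhaustive by playing the structure of $A$ as an $\ff[\gs]$-module (Theorem \ref{thm assoc}(2)) against the two possible behaviors of $\gs$ over the field $\ff$: either $\gs$ is algebraic over $\ff$, or it is transcendental. Since $\ff[\gs]$ is the image of the polynomial ring $\ff[\gl]$, exactly one of these holds, and in the algebraic case $\ff[\gs]$ is finite dimensional over $\ff$ while in the transcendental case $\ff[\gs]\cong\ff[\gl]$ is infinite dimensional. Because $A$ is spanned over $\ff[\gs]$ by the four elements $\gs,a,b,ab$, finite dimensionality of $\ff[\gs]$ over $\ff$ forces $A$ finite dimensional over $\ff$; so (a) holds whenever $\gs$ is algebraic. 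Conversely, if $A$ is finite dimensional over $\ff$, then $\ff[\gs]\subseteq A$ is finite dimensional over $\ff$, so $\gs$ is algebraic; thus the two halves of (a) are equivalent, and the remaining case $\gs$ transcendental is precisely (t)'s dimension statement (infinite dimensional, since $\ff[\gs]\cong\ff[\gl]$ already is).

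For the identification of $A$ in case (t) with the semigroup algebra of the free product $\lan a\ran*\lan b\ran$, the key observation is that this free product, as a set, consists of the alternating words $a, b, ab, ba, aba, bab, \dots$ together with an empty word if we adjoin $\e$; but Lemma \ref{lem assoc}(3) already gives $aba=(\e-\gs)a$ and $bab=(\e-\gs)b$, and iterating, every alternating word of length $\ge 3$ is an $\ff[\gs]$-combination of $a$, $b$, $ab$, $ba$ — and $ba=-\gs+a+b-ab$ — so in fact $A$ is spanned by $\{a,b,ab\}$ over $\ff[\gs]$ together with $\gs$. The plan is then to construct an explicit $\ff$-algebra homomorphism from the semigroup algebra $\ff[\lan a\ran*\lan b\ran]$ onto $A$ by sending the generators $a,b$ to $a,b$, which is clearly surjective; the real content is injectivity. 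For injectivity I would exhibit a faithful representation of $\ff[\lan a\ran*\lan b\ran]$ — for instance by mapping into $M_2(\ff[\gl])$ via the classical formulas (e.g. $a\mapsto\begin{pmatrix}1&0\\0&0\end{pmatrix}$, $b\mapsto\frac{1}{?}\begin{pmatrix}* & *\\ * & *\end{pmatrix}$ with entries chosen so that $(a-b)^2$ maps to the diagonal matrix with entry the indeterminate) — and check that the composite with the quotient $\ff[\lan a\ran*\lan b\ran]\onto A$ is compatible, so that the kernel of $\ff[\lan a\ran*\lan b\ran]\onto A$ lies in the kernel of this faithful representation, hence is zero. Alternatively, and perhaps more cleanly, I would produce a $\zz$-basis of the semigroup algebra (the alternating words), rewrite it using the two relations $aba=(\e-\gs)a$, $bab=(\e-\gs)b$ and $ba=-\gs+a+b-ab$, and verify that the resulting spanning set $\{\gs^i a, \gs^i b, \gs^i(ab)\mid i\ge 0\}\cup\{\gs^i\}$ of $A$ is $\ff$-linearly independent; linear independence follows because $\gs$ is transcendental, so $\ff[\gs]$ is a free $\ff$-module, and one checks that $\gs, a, b, ab$ are $\ff[\gs]$-independent (if they satisfied a relation, multiplying by suitable elements and using Lemma \ref{lem assoc} would force all coefficients to vanish, as in the proof of Lemma \ref{lem assoc comm}(3)).

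The main obstacle I expect is the injectivity (equivalently, the $\ff[\gs]$-linear independence of $\{\gs,a,b,ab\}$, or of the spanning set of alternating words) in case (t): surjectivity, exclusivity, and the dimension dichotomy are all straightforward. To handle it I would either lean on the concrete $2\times 2$ matrix model — noting that when $\gs$ is transcendental, the element $\gs-\gs^2$ is not invertible in $\ff[\gs]$ but becomes invertible after inverting it, and Theorem \ref{thm assoc}(6) then realizes the localization of $A$ as $M_2(\ff[\gs][(\gs-\gs^2)^{-1}])$, inside which the images of the alternating words are visibly independent over $\ff$ — or argue directly that any $\ff$-linear relation among alternating words, pushed through a chosen faithful matrix representation of the free product over $\ff[\gl]$, yields polynomial identities in the transcendental $\gs$ that force all coefficients to be zero. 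Either route reduces the problem to a routine but slightly tedious verification that the standard free-product-of-two-points semigroup algebra has the expected basis and that this basis survives the substitution $a\mapsto a$, $b\mapsto b$ without collapse, precisely because no algebraic relation on $\gs$ is available to create one.
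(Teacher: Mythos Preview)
Your treatment of the dichotomy itself is correct and in fact more explicit than the paper's: using Theorem~\ref{thm assoc}(2) to show that $\gs$ algebraic over $\ff$ is equivalent to $A$ being finite dimensional is exactly right, and the paper only records the trivial direction (finite dimensional $\Rightarrow$ algebraic) explicitly. The paper's argument is otherwise quite different: for the infinite-dimensional case it simply \emph{cites} \cite[Proposition~2]{L} to conclude that $A$ is the semigroup algebra of $\langle a\rangle * \langle b\rangle$, and then checks, by a leading-term computation in that semigroup algebra, that $\gs$ is transcendental there. So the paper outsources precisely the step you flag as the ``main obstacle.''

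The gap in your plan is in that injectivity step. Both of your suggested routes quietly assume what has to be proved. For the matrix/localization route: to invoke Theorem~\ref{thm assoc}(6) after inverting $\gs-\gs^2$, you need the localization map $A\to A[(\gs-\gs^2)^{-1}]$ to be injective, i.e.\ you need $\gs-\gs^2$ to be a non-zero-divisor on $A$; but absent freeness of $A$ over $\ff[\gs]$ (which is what you are trying to establish) this is not known. Likewise, exhibiting a faithful representation of the \emph{free} object $F=\ff[\langle a\rangle*\langle b\rangle]$ in $M_2(\ff[\gl])$ tells you nothing about $\ker(F\twoheadrightarrow A)$ unless you already have a compatible map $A\to M_2(\ff[\gl])$, and defining such a map is exactly the well-definedness problem you are trying to solve. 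For the direct route, multiplying a hypothetical relation $f_1\gs+f_2 a+f_3 b+f_4(ab)=0$ on both sides by $a$ and $b$ produces equations of the form $h(\gs)a=0$, $h'(\gs)b=0$, etc., but you then need that $\ff[\gs]$ acts faithfully on $a$ (and on $b$, $ab$), which is again part of the freeness statement. Indeed, the paper proves that very freeness only \emph{after} this proposition (Lemma~3.6), and its proof uses the present proposition to guarantee $Ag[\gs]\ne A$ for nonconstant $g$.

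What actually closes the gap (and what \cite{L} does) is to argue on the \emph{free} side: show that every nonzero two-sided ideal of $F$ has finite-codimensional quotient, so that if $A$ is infinite dimensional the surjection $F\twoheadrightarrow A$ must have zero kernel. Concretely, one uses the alternating-word basis of $F$ and observes that $(1-\gs)^k a$, $(1-\gs)^k b$, $(1-\gs)^k(ab)$, $(1-\gs)^k(ba)$ are exactly the words of lengths $2k{+}1$ and $2k{+}2$; a nonzero element of an ideal then forces a nontrivial polynomial relation on $\gs$ modulo that ideal, whence the quotient is finite dimensional by Theorem~\ref{thm assoc}(2). Your plan contains all the ingredients for this computation, but the logical direction must be ``nonzero kernel $\Rightarrow$ $\gs$ algebraic in $A$'' rather than ``$\gs$ transcendental $\Rightarrow$ build a map out of $A$.''
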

\begin{proof}
If $A$ is finite dimensional over $\ff,$ then $(a)$ holds,
while if $A$ is infinite dimensional over $\ff$ then,
by \cite[Proposition 2]{L}, $A$ is as in $(t)$.

Suppose $A$ is as in $(t)$.  Then a direct and easy computation,
based on the leading term starting with $ab$ (or $ba$), shows that
$\gs=a+b-ab-ba$ cannot satisfy a polynomial over $\ff$.
\end{proof}

\begin{lemma}\label{lem assocgen}
Let $g_1, g_2\in\ff[\gl]$ be relatively prime polynomials such that
$g_1[\gs]g_2[\gs]=0$. Then $A\cong A/Ag_1[\gs]\times A/Ag_2[\gs].$
\end{lemma}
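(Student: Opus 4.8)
The plan is to realize the right‑hand side as the Peirce‑type decomposition of $A$ with respect to a pair of complementary central idempotents built from $\gs$; in other words, to run the Chinese Remainder Theorem inside $A^{(1)}$.

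First I would use that $\ff$ is a field, so $\ff[\gl]$ is a principal ideal domain; hence $\gcd(g_1,g_2)=1$ yields a B\'ezout identity $h_1g_1+h_2g_2=1$ with $h_i\in\ff[\gl]$. Evaluating at $\gs$ — legitimate since $\ff[\gs]\subseteq A^{(1)}$ and $\gs\in\cent(A)$, so $\ff[\gs]\subseteq\cent(A^{(1)})$ — I set $e_1:=h_2[\gs]g_2[\gs]$ and $e_2:=h_1[\gs]g_1[\gs]$, so that $e_1+e_2=\e$. The hypothesis $g_1[\gs]g_2[\gs]=0$ gives $e_1e_2=0$, whence $e_1^2=e_1(e_1+e_2)=e_1$ and likewise $e_2^2=e_2$; thus $e_1,e_2$ are orthogonal central idempotents of $A^{(1)}$ summing to $\e$. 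Although $e_1,e_2$ need not lie in $A$ itself, for $x\in A$ we have $xe_i\in A$ because $A$ is an ideal of $A^{(1)}$, and $x=xe_1+xe_2$; together with $Ae_1\cap Ae_2=0$ (if $y$ lies in both, then $y=ye_2=(ye_1)e_2=0$) this shows $A=Ae_1\oplus Ae_2$ as a direct sum of two‑sided ideals, hence $A\cong Ae_1\times Ae_2$ as $\ff$-algebras.

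The second step is to match the two factors with the stated quotients by proving $Ag_1[\gs]=Ae_2$ and, symmetrically, $Ag_2[\gs]=Ae_1$. For the inclusion $\subseteq$ one notes $e_1g_1[\gs]=h_2[\gs]g_1[\gs]g_2[\gs]=0$, so right multiplication by $g_1[\gs]$ annihilates $Ae_1$ and $Ag_1[\gs]=(Ae_1)g_1[\gs]+(Ae_2)g_1[\gs]\subseteq Ae_2$; for $\supseteq$, any $z\in Ae_2$ satisfies $z=ze_2=(zh_1[\gs])g_1[\gs]$ with $zh_1[\gs]\in A$, so $z\in Ag_1[\gs]$. Then the ring homomorphism $A\to Ae_1$, $x\mapsto xe_1$, is surjective with kernel $Ae_2=Ag_1[\gs]$, giving $A/Ag_1[\gs]\cong Ae_1$; symmetrically $A/Ag_2[\gs]\cong Ae_2$. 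Composing with $A\cong Ae_1\times Ae_2$ then finishes the proof.

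I expect the only point requiring genuine care, rather than being a formality, to be the non‑unital case: the idempotents $e_1,e_2$ are manufactured in $A^{(1)}$ and may fail to lie in $A$, so one must consistently take every product ($xe_i$, $Ae_i$, $zh_1[\gs]$, and so on) in an order that returns it to $A$ — which works precisely because $A$ is an ideal in $A^{(1)}$. Beyond that bookkeeping, every step above is a routine verification.
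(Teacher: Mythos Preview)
Your argument is correct and is essentially the paper's Chinese Remainder Theorem proof: both pass to $A^{(1)}$, invoke a B\'ezout identity $h_1g_1+h_2g_2=1$, and exploit that $A$ is an ideal of $A^{(1)}$ to handle the non-unital case. The only cosmetic difference is that you package the argument via the explicit central idempotents $e_i=h_j[\gs]g_j[\gs]$ and the internal decomposition $A=Ae_1\oplus Ae_2$, whereas the paper verifies injectivity and surjectivity of the diagonal map $A\to A/Ag_1[\gs]\times A/Ag_2[\gs]$ directly; these are two standard presentations of the same idea.
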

\begin{proof}
This follows from the Chinese Remainder Theorem, whose argument we review
since we need it for algebras without $\e$. First note that
$Ag_i[\gs]$ is an ideal of $A$ since $g_i[\gs]$ is central in $A^{(1)}$.

Now if $r \in Ag_1[\gs] \cap Ag_2[\gs]$ then $rg_2[\gs] = rg_1[\gs] = 0,$ so
writing
\[
a_1[\gl]g_1[\gl] + a_2[\gl]g_2[\gl]  = 1,
\]
for $a_1,a_2 \in \ff[\la]$, we see
that $r = ra_1[\gs]g_1[\gs] + ra_2[\gs]g_2[\gs] = 0,$
implying\linebreak $A \hookrightarrow A/Ag_1[\gs] \times  A/Ag_2[\gs]$. On the other hand,
for any $r_1 + Ag_1[\gs] \in A/Ag_1[\gs]$ and $r_2 + Ag_2[\gs] \in A/Ag_2[\gs]$ we take
\[
r = r_1a_2[\gs]g_2[\gs] + r_2a_1[\gs]g_1[\gs],
\]
 and note that
 \[
r   + Ag_1[\gs]   = r_1a_2[\gs]g_2[\gs] + Ag_1[\gs] = r_1(1 -
 a_1[\gs]g_1[\gs])+ Ag_1[\gs]   = r_1+ Ag_1[\gs]
\]
and likewise $r   + Ag_2[\gs]   = r_2 + Ag_2[\gs].$
\end{proof}

\begin{prop}\label{prop assocmain}
Let $g[\gl]\in\ff[\gl]$ be irreducible, let $k\ge 1$ and
let $h[\gl]=g[\gl]^k$.  Suppose that $Ah[\gs]\ne A,$ and let
$\widebar{A}=A/Ah[\gs]$. Let
$\bar\gs$ be the image
of $\gs$ in $A/Ah[\gs]$.  Then
\begin{enumerate}
\item
If $g[\gl]=\gl,$ then $\bar\gs$ is nilpotent in $\widebar{A},$ and there exists a nilpotent ideal $\widebar{J}$
in $\widebar{A}$ such that $\widebar{A}/\widebar{J}$ is abelian.

\item
If $g[\gl]\ne\gl,$ then $\widebar{A}$ is unital, furthermore $\bar\gs$ is
invertible in $\widebar{A}$.  Denote by $\bar e$ the identity
element of $\widebar{A}$. Then $h[\bar\gs]=\bar 0$ (where we substitute
$1$ by $\bar e$ in $h[\bar\gs]$).

\item
If $g[\gl]=\gl-1,$ then $\bar\gs-\bar e$ is nilpotent in $\widebar{A}$
and there exists a nilpotent ideal $\widebar{J}$
in $\widebar{A}$ such that $\widebar{A}/\widebar{J}$ is abelian.

\item
If $g[\gl]$  is relatively prime to $\gl(\gl-1),$ then, by (2),
$\widebar{A}$ is unital, and $\widebar{A}\cong M_2(\ff[\bar\gs])$.
\end{enumerate}
\end{prop}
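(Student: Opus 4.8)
The plan is to establish (2) first, since it is the engine for (3) and (4), and then to dispatch (1) and (3) by one common ``reduce to a known case'' device and (4) by one invertibility computation plus Theorem~\ref{thm assoc}(6). The observation to record at the outset is that $\gs=a+b-ab-ba$ actually lies in $A$ (not merely in $A^{(1)}$), hence so does every positive power $\gs^i$; writing $h[\gl]=h_0+h_1\gl+\dots+h_d\gl^d$ with $d=k\deg g\ge 1$, we therefore get $h[\gs]=h_0\e+w$ where $w:=h_1\gs+\dots+h_d\gs^d\in A$, and $h_0=g[0]^k\ne 0$ precisely when $g[\gl]\ne\gl$. For (2), with $g[\gl]\ne\gl$, I would then show that the image $\bar e$ of $-h_0^{-1}w$ is a two-sided identity of $\widebar{A}$: for $r\in A$ one has $rw=r\,h[\gs]-h_0r$ and $r\,h[\gs]\in Ah[\gs]$, so $\bar r\,\bar e=\bar e\,\bar r=\bar r$. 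Given $\bar e$, the relation $h[\bar\gs]=h_0\bar e+\bar w=\bar 0$ is immediate from the definition of the substitution, and rewriting $\bar 0=h_0\bar e+\bar\gs\,(h_1\bar e+h_2\bar\gs+\dots+h_d\bar\gs^{\,d-1})$ displays $-h_0^{-1}(h_1\bar e+\dots+h_d\bar\gs^{\,d-1})$ as an inverse of $\bar\gs$ (two-sided, since $\bar\gs$ is central).

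For (1) and (3) the plan is the same. In (1), $h[\gs]=\gs^k\in A$, so $Ah[\gs]=A\gs^k$ and $\gs^{k+1}=\gs\cdot\gs^k\in A\gs^k$; hence $\bar\gs$ is a central nilpotent of $\widebar{A}$, the ideal $N:=\widebar{A}\bar\gs$ satisfies $N^{k+1}\subseteq\widebar{A}\bar\gs^{\,k+1}=0$, and $\widebar{A}/N$ is an associative $\ff$-algebra generated by two idempotents whose difference squares to $0$. If those two idempotents are equal, $\widebar{A}/N$ is commutative and $\widebar{J}:=N$ works; otherwise Theorem~\ref{thm assoc}(4)--(5) provides an ideal $I\supseteq N$ of $\widebar{A}$ with $(I/N)^3=0$ and $\widebar{A}/I$ commutative, and then $\widebar{J}:=I$ is a nilpotent ideal ($\widebar{J}^{\,3}\subseteq N$) with commutative quotient. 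Part (3) runs identically with $\bar\gs-\bar e$ replacing $\bar\gs$ and the case $\gs=\e$ of Theorem~\ref{thm assoc}(3),(5) replacing the case $\gs=0$: since $g[\gl]=\gl-1\ne\gl$, part (2) gives that $\widebar{A}$ is unital with $h[\bar\gs]=(\bar\gs-\bar e)^k=\bar 0$, so $N:=\widebar{A}(\bar\gs-\bar e)$ is a nilpotent ideal and $\widebar{A}/N$ has the two generators differing by a square-zero element with square equal to $\bar e$.

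For (4): coprimality of $g$ with $\gl(\gl-1)$ forces $g[\gl]\ne\gl$, so part (2) applies and gives $\widebar{A}$ unital with identity $\bar e$, with $\bar\gs$ invertible and $h[\bar\gs]=g[\bar\gs]^k=\bar 0$. A Bézout relation $P g^{k}+Q(\gl-1)=1$ in $\ff[\gl]$, evaluated at $\bar\gs$, collapses (using $g[\bar\gs]^k=\bar 0$) to $Q[\bar\gs](\bar\gs-\bar e)=\bar e$, so $\bar\gs-\bar e$ is invertible and hence so is $\bar\gs-\bar\gs^2=\bar\gs(\bar e-\bar\gs)$. It remains to note $\bar a\ne\bar b$: otherwise $\widebar{A}$ would be commutative, whence $\bar\gs^2=\bar\gs$ by Lemma~\ref{lem assoc comm}, which with $\bar\gs$ invertible gives $\bar\gs=\bar e$, contradicting invertibility of $\bar\gs-\bar e$ in the nonzero ring $\widebar{A}$. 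Now Theorem~\ref{thm assoc}(6), applied to $\widebar{A}$, yields $\widebar{A}\cong M_2(\ff[\bar\gs])$.

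The one genuinely delicate point I expect is the non-unital bookkeeping in (2): $h[\gs]$ naturally lives in $A^{(1)}$, and when $\e\notin A$ one must build the identity of $\widebar{A}$ by hand, the whole construction hinging on $h_0=g[0]^k\ne 0$; once that is done, the invertibility assertions and even the meaning of ``$h[\bar\gs]=\bar 0$'' are formal consequences. The rest is routine, provided one keeps invoking Theorem~\ref{thm assoc}(3)--(6) and, at each quotient, peels off the degenerate case in which the two generating idempotents become equal.
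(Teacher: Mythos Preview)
Your overall plan matches the paper's proof, and parts (2), (3), and (4) are correct as written. In (4) you are in fact more careful than the paper: you explicitly verify $\bar a\ne\bar b$ before invoking Theorem~\ref{thm assoc}(6), which does require the two generating idempotents to be distinct.

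There is, however, a small but genuine gap in your argument for (1). You set $N:=\widebar{A}\,\bar\gs$ and then claim that in $\widebar{A}/N$ the two generating idempotents have difference squaring to zero. This amounts to $\bar\gs\in\widebar{A}\,\bar\gs$, i.e., $\gs\in A\gs+A\gs^k=A\gs$. That can fail when $A$ is non-unital: if $\gs$ is transcendental over $\ff$, then $A$ is a free $\ff[\gs]$-module on $\{\gs,a,b,ab\}$, and $A\gs$ consists exactly of the elements whose four coordinates lie in $\gs\,\ff[\gs]$; so $\gs\notin A\gs$. In that situation the image of $\bar\gs$ in $\widebar{A}/N$ is a nonzero square-zero central element, and you are not yet in the $\gs=0$ case of Theorem~\ref{thm assoc}(4)--(5). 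The paper sidesteps this by taking the slightly larger ideal $\widebar{I}:=A^{(1)}\gs/A\gs^k=\ff\bar\gs+\widebar{A}\,\bar\gs$. This is still nilpotent (since $\bar\gs$ is central and $\bar\gs^{\,k+1}=0$), it certainly contains $\bar\gs$, and $\widebar{A}/\widebar{I}\cong A/A^{(1)}\gs$ now genuinely has $\gs$-element equal to zero, so Theorem~\ref{thm assoc}(5) applies. With this one-line replacement your proof of (1) goes through. No such fix is needed in (3): there $\widebar{A}$ is unital by (2), so $\bar\gs-\bar e=\bar e(\bar\gs-\bar e)\in\widebar{A}(\bar\gs-\bar e)$ automatically.
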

\begin{proof}
(1):\quad Suppose that $g[\gl]=\gl$.  Then $Ah[\gs]=A\gs^k,$
so the ideal $\widebar{I}:=A^{(1)}\gs/A\gs^k$ is a nilpotent ideal in $\widebar{A}$. Indeed
\[
\left(A^{(1)}\gs\right)^{k+1}=\left(A^{(1)}\gs\right)A^{(1)}\gs^k\subseteq AA^{(1)}\gs^k=A\gs^k.
\]
Also in the algebra $\widebar{A}/\widebar{I}\cong A/A^{(1)}\gs$ the image of $\gs$
is $0$.  Hence by \linebreak Theorem \ref{thm assoc}(5), the algebra $\widebar{A}/\widebar{I}$ has
a nilpotent ideal over which it is commutative.
Let $\widebar{J}$ be the preimage in $\widebar{A}$ of that
ideal.  Then $\widebar{J}$ is the ideal whose existence is asserted in (1).
\medskip

\noindent
(2):\quad
Let
\[
\bar{ }\ \colon A\to\widebar{A}
\]
be the canonical homomorphism.
Write $h[\gl]=\ga\e+\gl q[\gl],$ with $0\ne\ga\in\ff$.
Then $h[\gs]=\ga\e+\gs q[\gs]$.  We have
\[
\bar 0=\widebar{h[\gs] x}=\ga\bar x+\widebar{\gs q[\gs]}\bar x,\text{ for all }x\in A.
\]
Let $e:=\gs (-\ga^{-1}q[\gs])\in A$.  Then we see that $\bar e=\widebar{\gs (-\ga^{-1}q[\gs])}$
is the identity element of $\widebar{A}$
and $\widebar{A}$ is unital.  Further
\[
\bar e=\widebar{(a-b)}\widebar{(a-b)(-\ga^{-1}q[\gs])}
\]
(recall that $\gs=(a-b)^2$), and we see that $\widebar{(a-b)}$
is invertible in $\widebar{A},$ so also $\bar\gs$ is invertible
in $\widebar{A}$.

Finally we have $h[\bar\gs]=h[\bar\gs]\bar e=\widebar{h[\gs]e}=\bar 0,$
so the last part of (2) holds.
\medskip

\noindent
(3):\quad
Suppose that $g[\gl]=\gl-1$.  Then the ideal $\widebar{I}:=A(\gs-\e)/Ah[\gs]$
is a nilpotent ideal in $\widebar{A}$ and the image of $\gs$ in $\widebar{A}/\widebar{I}$
is the identity of this algebra.  As in (1) we can apply Theorem \ref{thm assoc}(5) to obtain the ideal $\widebar{J},$
and (3) holds.
\medskip

\noindent
(4):\quad
Assume now that $g[\gl]$ is relatively prime to $\gl(\gl-1)$.
We show that $\bar\gs-\bar\gs^2$ is invertible in $\widebar{A}$.
Let $u[\gl], v[\gl]\in\ff[\gl]$ such that
\[
u[\gl](\gl-1)+v[\gl]h[\gl]=\e.
\]
Multiplying by $\gl$ we get that $u[\gl]\gl(\gl-1)+v[\gl]\gl h[\gl]=\gl$.
Substituting $\gs$ for $\gl$ we see that $u[\gs]\gs(\gs-\e)+v[\gs]\gs h[\gs]=\gs$.
Hence $\widebar{u[\gs](\gs^2-\gs)}=\bar\gs$.  Since $\bar\gs$ is invertible in
$\widebar{A}$ we see that $\bar\gs^2-\bar\gs$ is invertible in $\widebar{A}$.
Now Theorem \ref{thm assoc}(6) completes the proof of (4).
\end{proof}

\begin{notation}
If $B$ is an algebra generated by two idempotents $e$ and $f,$
we denote by $\gs_B:=(e-f)^2$ (here $e$ and $f$ are understood
from the context).
\end{notation}

As a corollary to Proposition \ref{prop assocmain} we get the following theorem, which handles the case where
$\gs$ is algebraic over $\ff$:
\begin{thm}\label{thm assocmain}
Suppose that $\gs$ is algebraic over $\ff$.
Then $A$ is a direct product of algebras $B_0\times B_1\times\dots\times B_m$
such that if we denote by $a_{B_i}, b_{B_i}$ the image of $a, b$ in $B_i,$
then we have
\begin{enumerate}
\item
$B_i$ is generated by the idempotents $a_{B_i}, b_{B_i}.$

\item
$B_i$ is unital for $i\ge 1$.

\item
$B_0=0,$ or $\gs_{B_0}$ is nilpotent and $B_0$ contains a nilpotent ideal
$J_0$ such that $B_0/J_0$ is commutative.

\item
$B_1=0,$ or $h[\gs_{B_1}]=0,$ where $h[\gl]=(\gl-1)^k,\, k\ge 1$.  Furthermore
$B_1$  contains a nilpotent ideal
$J_1$ such that $B_1/J_1$ is commutative.

\item
For $i\ge 2,$ we have $h[\gs_{B_i}]=0,$ where $h[\gl]=g[\gl]^k,\, k\ge 1,$ and
where $g[\gl]\in\ff[\gl]$ is an irreducible polynomial relatively prime to $\gl(\gl-1)$.  Furthermore
$B_i\cong M_2(\ff[\gs_{B_i}])$.
\end{enumerate}
\end{thm}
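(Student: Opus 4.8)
The plan is to decompose $A$, via the Chinese Remainder Theorem, into its ``$\gs$-primary'' pieces and then to identify each piece from Proposition~\ref{prop assocmain}. Since $\gs$ is algebraic over $\ff$, there is a nonzero $f[\gl]\in\ff[\gl]$ with $f[\gs]=0$ in $A^{(1)}$. Factor it as $f=c\prod_{i}g_i[\gl]^{k_i}$ with $0\ne c\in\ff$, the $g_i$ distinct monic irreducibles and $k_i\ge 1$, and order the factors so that $g_0[\gl]=\gl$ (exponent $k_0$) when $\gl\mid f$, $g_1[\gl]=\gl-1$ (exponent $k_1$) when $(\gl-1)\mid f$, and $g_2,\dots,g_m$ are the remaining irreducible factors, which are then relatively prime to $\gl(\gl-1)$. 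Set $h_i:=g_i[\gl]^{k_i}$; these are pairwise relatively prime and $\prod_i h_i[\gs]=c^{-1}f[\gs]=0$.

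Iterating Lemma~\ref{lem assocgen} now gives the decomposition. Writing $\prod_i h_i=h_0\cdot(h_1\cdots h_m)$, the two factors are coprime and their product annihilates $\gs$, so the lemma yields $A\cong A/Ah_0[\gs]\times A/A(h_1\cdots h_m)[\gs]$; inside the second factor the image of $\gs$ is still annihilated by $h_1\cdots h_m$, so $h_1$ can be split off, and so on. At each stage the kernel of the map from $A$ onto the piece just produced is $Ah_i[\gs]$, since $h_i$ divides the product of the factors not yet removed; hence
\[
A\;\cong\;B_0\times B_1\times\dots\times B_m,\qquad B_i:=A/Ah_i[\gs],
\]
with the convention that $B_0$ (resp.\ $B_1$) is $0$ when $\gl\nmid f$ (resp.\ $(\gl-1)\nmid f$), and any $B_i$ with $Ah_i[\gs]=A$ is also $0$ (harmlessly). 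Since $f$ has finitely many irreducible factors the product is finite, and each $B_i$, being a quotient of $A$, is associative and generated by the images $a_{B_i},b_{B_i}$ of $a$ and $b$; this is~(1).

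Finally I would feed each nonzero $B_i$ into Proposition~\ref{prop assocmain}, with $\widebar{A}=B_i$, $g=g_i$, $h=h_i$ (so $Ah_i[\gs]\ne A$). For $B_0$ ($g_i=\gl$), part~(1) of that proposition shows $\gs_{B_0}$ is nilpotent and yields the nilpotent ideal $J_0$ with $B_0/J_0$ commutative, which is~(3). For $B_1$ ($g_i=\gl-1$, so $g_i\ne\gl$), part~(2) shows $B_1$ is unital with $h[\gs_{B_1}]=0$, and part~(3) yields the nilpotent ideal $J_1$ with $B_1/J_1$ commutative, which is~(4). For $i\ge 2$ ($g_i$ coprime to $\gl(\gl-1)$, hence $\ne\gl$), part~(2) shows $B_i$ is unital with $h[\gs_{B_i}]=0$, and part~(4) shows $B_i\cong M_2(\ff[\gs_{B_i}])$, which is~(5). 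In each case with $i\ge 1$ we invoked $g_i\ne\gl$, which is exactly the hypothesis part~(2) needs to declare $B_i$ unital; this is~(2).

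The only delicate points are bookkeeping ones: checking that the iterated splitting really produces the specific quotients $A/Ah_i[\gs]$ (and not just abstract direct factors), and disposing of the degenerate subcases. A vanishing $B_i$ makes every claim about it vacuous. The two generating idempotents of $B_i$ coincide only if $\gs_{B_i}=0$, which for $i\ge 1$ is impossible because Proposition~\ref{prop assocmain}(2) makes $\gs_{B_i}$ invertible there, and for $B_0$ simply means $B_0$ is generated by one idempotent, hence commutative, so that~(3) holds with $J_0=0$. Thus the theorem is a direct corollary of Proposition~\ref{prop assocmain} once the Chinese Remainder decomposition is in hand, and I do not anticipate any obstacle beyond this bookkeeping.
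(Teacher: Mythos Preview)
Your proposal is correct and follows essentially the same route as the paper: factor an annihilating polynomial of $\gs$, apply Lemma~\ref{lem assocgen} repeatedly to split $A$ into its primary quotients $A/Ah_i[\gs]$, and then invoke Proposition~\ref{prop assocmain} on each piece. The paper uses the minimal polynomial rather than an arbitrary annihilator and omits the bookkeeping you spell out, but there is no substantive difference.
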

\begin{proof}
Write the monic minimal polynomial $m[\gl]$ of $\gs$ over $\ff$ as
\[
m[\gl]=\gl^{k_1}(\gl-1)^{k_2}g_3[\gl]^{k_3}\cdots g_m[\gl]^{k_m},
\]
with $g_3,\dots, g_m$ monic, irreducible, pairwise distinct and distinct from $\gl$ and $\gl-1,$
and where we allow $k_i=0,$ for $i=1$ or $i=2$.

By a repeated application of Lemma \ref{lem assocgen} we see that
\[
A\cong A/A\gs^{k_1}\times A/A(\gs-\e)^{k_2}\times A/A(g_3[\gs])^{k_3}\times\cdots\times A/A(g_m[\gs])^{k_m}.
\]
Now the theorem follows from Proposition \ref{prop assocmain}.
\end{proof}

\begin{lemma}
Assume that $\gs$ is transcendental over $\ff$.
Then
\begin{enumerate}
\item (Bergman~\cite[\S 12.2]{B})
$A$ is a free module over $\ff[\gs]$ with basis $\gs, a, b, ab;$

\item
there is an involution $*$ on $A$ as defined in
Theorem \ref{thm assoc}(8).
\end{enumerate}
\end{lemma}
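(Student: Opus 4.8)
The plan is to deduce both parts directly from the structure results already established. For part (1), the key observation is that when $\gs$ is transcendental over $\ff$, the polynomial ring $\ff[\gs]$ is an integral domain and $\gs$ is a nonzerodivisor in $A$: indeed, by Proposition \ref{prop alg or free} we are in case (t), so $A$ is the semigroup algebra of the free product $\lan a\ran * \lan b\ran$, and one can read off from the normal-form description of that semigroup that the four elements $\gs, a, b, ab$ are $\ff[\gs]$-linearly independent. (Alternatively, one argues directly: a relation $\ga_\gs[\gs]\gs + \ga_a[\gs]a + \ga_b[\gs]b + \ga_{ab}[\gs](ab) = 0$ would, after clearing denominators and expanding each $\ga_\bullet[\gs]$ via $\gs = a+b-ab-ba$, produce a nontrivial relation among the reduced words in $\lan a\ran*\lan b\ran$ — impossible — so all coefficients vanish.) Combined with Theorem \ref{thm assoc}(2), which says $A$ is spanned over $\ff[\gs]$ by $\gs, a, b, ab$, this shows $\{\gs, a, b, ab\}$ is an $\ff[\gs]$-basis, giving (1).

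For part (2), I would invoke Theorem \ref{thm assoc}(8): the map $*$ sending $\ga_\gs\gs + \ga_a a + \ga_b b + \ga_{ab}(ab) \mapsto \ga_\gs\gs + \ga_a a + \ga_b b + \ga_{ab}(ba)$ is a well-defined involution on $A$ precisely when condition $(i)$ holds, namely that $\ga(ab)\in J = \ff[\gs]\gs + \ff[\gs]a + \ff[\gs]b$ for some $\ga\in\ff[\gs]$ forces $\ga(ba - ab) = 0$. But by part (1) just established, $\{\gs, a, b, ab\}$ is a \emph{free} basis, so if $\ga(ab)$ lies in the $\ff[\gs]$-span of $\gs, a, b$ then $\ga = 0$ (the $ab$-coordinate must vanish), and hence $\ga(ba - ab) = 0$ trivially. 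Thus condition $(i)$ is satisfied, and Theorem \ref{thm assoc}(8) yields the involution. This is exactly the situation flagged in Remark \ref{rem thm 1.1}(3).

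The only genuine content is the linear independence claim underlying (1); everything else is a citation of earlier results. The main obstacle is therefore to justify that $\gs, a, b, ab$ are independent over $\ff[\gs]$ cleanly — and the slick route is to use the free-product description from Proposition \ref{prop alg or free}, where independence is visible from the combinatorics of reduced words, rather than attempting a bare-hands computation with powers of $\gs$. I expect the write-up to be short: one paragraph establishing the basis via the semigroup algebra, then one line each for the two conclusions.
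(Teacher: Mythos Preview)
Your treatment of part~(2) matches the paper's exactly. For part~(1), however, you take a genuinely different route. You propose to read off the $\ff[\gs]$-independence of $\gs, a, b, ab$ from the reduced-word basis of the semigroup algebra of $\lan a\ran*\lan b\ran$, via a leading-term argument on word length; this is correct (one checks that the unique maximal-length words appearing in $\gs^k a$, $\gs^k b$, $\gs^k(ab)$, and $\gs^{k+1}$ are respectively $(ab)^k a$, $(ba)^k b$, $(ab)^{k+1}$, and the pair $(ab)^{k+1}$, $(ba)^{k+1}$, and then peels off coefficients by parity of the maximal length), though the bookkeeping is a bit more involved than your sketch lets on. The paper instead argues by reduction: given a hypothetical relation $f_1[\gs]\gs+f_2[\gs]a+f_3[\gs]b+f_4[\gs](ab)=0$ with some $f_i\ne 0$, one picks an irreducible $g[\gl]\in\ff[\gl]$ prime to $\gl(\gl-1)$ and to each $f_i$, and passes to $B=A/Ag[\gs]$. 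By Proposition~\ref{prop assocmain}(4), $B\cong M_2(\ff[\bar\gs])$ is four-dimensional over the field $\ff[\bar\gs]$, so the images of $\gs, a, b, ab$ (which span $B$ by Theorem~\ref{thm assoc}(2)) are independent there; but the relation descends with all coefficients $f_i[\bar\gs]$ nonzero, a contradiction. The paper's approach has the advantage of recycling the structural machinery already built and sidestepping any word combinatorics; yours is more self-contained and closer in spirit to Bergman's original treatment.
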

\begin{proof}
First note that if $g[\gl]\in\ff[\gl]$ is a polynomial which is not a scalar
(i.e., $g\notin\ff$), then $Ag[\gs]\ne A$.  This follows from Proposition
\ref{prop alg or free}.

Suppose $f_1[\gs]\gs+f_2[\gs]a+f_3[\gs]b+f_4[\gs]ab=0$. Let
$g[\gl]\in\ff[\gl]$ be an irreducible polynomial prime to
$\gl(\gl-1)$ and to $f_1,\dots, f_4$. Consider $B:=A/Ag[\gs]$.  It
is a nontrivial algebra, so by Proposition \ref{prop assocmain}(4),
$B\cong M_2(\ff[\gs_B])$. Clearly  $\ff[\gs_B]$ is a field, so since
$B$ is $4$-dimensional over $\ff[\gs_B]$ and is spanned by the
images of $\gs, a, b, ab$ (Theorem \ref{thm assoc}(2)), we get a
contradiction. This shows (1) and (2) follows from (1) and Theorem
\ref{thm assoc}(8).
\end{proof}


\end{document}